\documentclass[11pt]{amsart}

\usepackage{epsfig,overpic,comment}
\usepackage[usenames,dvipsnames,svgnames,table]{xcolor}
\usepackage[hyphens]{url}
\usepackage[pagebackref,linktocpage=true,colorlinks=true,linkcolor=Blue,citecolor=BrickRed,urlcolor=RoyalBlue]{hyperref}
\usepackage[msc-links,abbrev]{amsrefs}
\usepackage{amsmath,amsthm,amssymb,marginnote}
\usepackage{enumerate}
\usepackage[T1]{fontenc}

\usepackage{lmodern}
\usepackage[utf8]{inputenc}

\theoremstyle{definition}

\newcommand{\C}{\mathcal{C}}

\newcommand{\G}{\mathcal{G}}

\newcommand{\RR}{\mathbb{R}}

\newcommand{\HH}{\mathbb{H}}
\newcommand{\SSS}{\mathbb{S}}

\newcommand{\p}{\partial}

\newtheorem{prop}{Proposition}[section]
\newtheorem{thm}[prop]{Theorem}
\newtheorem{lem}[prop]{Lemma}
\newtheorem{cor}[prop]{Corollary}

\theoremstyle{remark}

\numberwithin{equation}{section}
\allowdisplaybreaks
\begin{document}
	\setlength{\baselineskip}{1.2\baselineskip}
	
	\title[Sharp Minkowski-Type Inequality in Cartan-Hadamard 3-Spaces]{Sharp Minkowski-Type Inequality in Cartan-Hadamard 3-Spaces}
	
	\author[F. Hong]{Fang Hong}
	\address{Department of Mathematics and Statistics, McGill University, Montreal, Quebec, H3A 2K6, Canada.}
	\email{\href{mailto:fang.hong@mail.mcgill.ca}{fang.hong@mail.mcgill.ca}}
	\urladdr{\url{https://sites.google.com/view/fang-hong}}

	\begin{abstract}
		In this paper, we prove a sharp Minkowski-type inequality in Cartan-Hadamard 3-spaces using harmonic mean curvature flow and improve the known estimates for total mean curvature in hyperbolic 3-space. In particular, we sharpen Ghomi-Spruck's result in \cite{ghomi-spruck2023} by retaining the volume contribution in the monotonicity argument. As a corollary, we obtain a comparison theorem relating the total mean curvature of convex surfaces in Cartan-Hadamard 3-spaces to their enclosed volume.
	\end{abstract}
	
	\subjclass[2020]{Primary: 53C20, 58J05; Secondary: 52A38, 49Q15.}
	\keywords{Nonpositive curvature, Hyperbolic space, Harmonic mean curvature flow, Total mean curvature, Minkowski inequality.}
	\thanks{This research was supported by the Dr. and Mrs. Milton Leong Fellowships in Science and an ISM Graduate Scholarship.}
	
	\maketitle
	%\tableofcontents

	\section{Introduction}
	
	A classical result of Minkowski \cite{minkowski1903} states that,
		for any strictly convex surface $\Gamma$ embedded in Euclidean space $\RR^3$,
		\begin{equation}
			\label{eq:minkowski0}
			M(\Gamma)\geq \sqrt{16\pi S(\Gamma)},
		\end{equation}
		where $S(\Gamma)$ denotes the surface area of $\Gamma$, $M(\Gamma) := \int_\Gamma H\,d\mu$ is the \emph{total mean curvature} of $\Gamma$. Here the \emph{mean curvature} of $\Gamma$ is the trace of the second fundamental form $H:=\textup{trace}(\mathrm{I\!I}_\Gamma)$. Equality holds only when $\Gamma$ is a sphere in $\RR^3$.
	Finding analogues of Minkowski’s inequality \eqref{eq:minkowski0} beyond Euclidean space has been a long-standing problem \cite{santalo1963}, particularly in hyperbolic and Cartan--Hadamard settings.
	In recent years, various Minkowski-type inequalities have been established \cites{gallego-solanes,natario2015,brendle-wang}. Curvature-flow methods have produced several important advances \cites{wang-xia2014,ge-wang-wu2014,andrews-hu-li-2020,scheuer2020,Brendle-Guan-Li}.
	
	Throughout the paper, a \emph{Cartan-Hadamard space}, also called \emph{Cartan-Hadamard manifold} means a complete, simply connected Riemannian space of nonpositive curvature. Cartan-Hadamard spaces include Euclidean and hyperbolic spaces as basic examples. The starting point for the present work is the following theorem of Ghomi and Spruck \cite{ghomi-spruck2023}. They proved that for any smooth convex surface $\Gamma$ in a Cartan-Hadamard 3-space $N$ with curvature $K\leq a\leq 0$,
	\begin{equation}\label{eq:minkowski2}
		M(\Gamma)\geq \sqrt{16\pi S(\Gamma)- 2 a S(\Gamma)^2},
	\end{equation}
	where equality holds only if the domain enclosed by $\Gamma$ is isometric to a ball in $\RR^3$.
	
	Consider the hyperbolic space $\HH^3(a)$ with constant curvature $a \leq 0$. We define the \emph{isoperimetric profile function} $\eta_{0,a}(x)$ and the \emph{total mean curvature profile function} $\xi_{0,a}(x)$ of $\HH^3(a)$. They are functions $[0, \infty) \to [0,\infty)$ such that $\eta_{0,a}(x)$ is the surface area of the (geodesic) sphere in $\HH^3(a)$ with volume $x$, and $\xi_{0,a}(x)$ is the total mean curvature of the sphere in $\HH^3(a)$ with volume $x$. Our contribution is to modify the harmonic mean curvature flow argument so that it keeps track of the enclosed volume, leading to the strengthened estimate below.
	
	\begin{thm}\label{thm:minkowski1}
		Let $\Omega$ be a domain in a Cartan-Hadamard 3-space $N$ with curvature $K\leq a\leq 0$, such that its boundary $\Gamma$ is a smooth strictly convex surface. Then 
		\begin{equation}
			\label{eq:minkowski1}
			M(\Gamma)\geq \sqrt{16\pi S(\Gamma)- 2 a S(\Gamma)^2 - 2 a \eta_{0,a}(V(\Omega))^2},
		\end{equation}
		where $V(\Omega)$ denotes the volume of $\Omega$, and $\eta_{0,a}$ is the isoperimetric profile function of $\HH^3(a)$.
		Equality holds only if $\Omega$ is isometric to a ball in $\HH^3(a)$.
	\end{thm}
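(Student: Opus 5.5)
We follow the harmonic mean curvature flow strategy of Ghomi--Spruck, but keep the volume term that their monotonicity argument discards. Let $\Gamma_t$ evolve by harmonic mean curvature flow, so the normal speed is $F=-\frac{K_\Gamma}{H}\cdot 2$, i.e.\ the harmonic mean $\frac{2\kappa_1\kappa_2}{\kappa_1+\kappa_2}$ in the inward direction. We rely on the result (used in \cite{ghomi-spruck2023}, via Xu / Andrews) that in a Cartan--Hadamard 3-space strict convexity is preserved and $\Gamma_t$ shrinks to a point in finite time $T$, with $\Omega_t$ shrinking to that point. Set
\[
\Phi(t):=M(\Gamma_t)^2-16\pi S(\Gamma_t)+2aS(\Gamma_t)^2+2a\,\eta_{0,a}(V(\Omega_t))^2 .
\]
Then $\Phi(t)\to 0$ as $t\to T$, since all of $M$, $S$ and $V$ tend to $0$. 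So it suffices to show $\Phi'(t)\le 0$, which gives $\Phi(0)\ge 0$, i.e.\ \eqref{eq:minkowski1}.

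\textbf{Step 1: evolution equations.} Along a normal variation with inward speed $F$ we have
\[
S'=-\int FH,\qquad V'=-\int F,
\]
\[
M'=\int\Bigl(\Delta F-F\bigl(|\mathrm{I\!I}|^2+\mathrm{Ric}(\nu,\nu)\bigr)\Bigr)+\int FH^2\cdot(\text{sign}),
\]
which combine, as in Ghomi--Spruck, into
\[
M'=-\int F\bigl(2K_\Gamma-\mathrm{Ric}(\nu,\nu)\bigr)\ \text{(up to Gauss equation terms)}.
\]
With $F=2K_e/H$, where $K_e=\kappa_1\kappa_2$, we get $FH=2K_e$. This gives $S'=-2\int K_e$, and by Gauss--Bonnet together with the Gauss equation $K_\Gamma=K_e+K_N\le K_e+a$,
\[
\int K_e\ \ge\ 4\pi-aS .
\]
The $M'$ term is handled exactly as in \cite{ghomi-spruck2023}: there one gets $2MM'\le 16\pi S'-4aSS'$ plus a leftover nonpositive contribution. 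That leftover is where we must extract the new term.

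\textbf{Step 2: isolating the volume term.} The derivative of the new term is
\[
\frac{d}{dt}\,2a\,\eta_{0,a}(V)^2=4a\,\eta\,\eta'(V)\,V'=-4a\,\eta\,\eta'(V)\int F ,
\]
which is $\ge 0$ because $a\le 0$. We must therefore show that the slack in the Ghomi--Spruck inequality dominates $-4a\,\eta(V)\eta'(V)\int F$. That slack comes from Cauchy--Schwarz, $M^2\le S\int H^2$, and from the Gauss-equation step, and carries a factor $-a$ times a quantity like $M\int F$ or $S\int K_e$.

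Since $\eta\,\eta'$ is the derivative of $\tfrac12\eta^2$, in $\HH^3(a)$ we have $\eta(V)\eta'(V)=\tfrac12\,\xi_{0,a}(V)$-type identities. Concretely, for a sphere $\eta'=H_{\text{sphere}}$, so $\eta\,\eta'=\xi_{0,a}(V)$. Thus we need a bound $\xi_{0,a}(V)\int F\le(\text{slack})$. This would follow from the isoperimetric inequality in Cartan--Hadamard 3-spaces (Kleiner), $\eta_{0,a}(V)\le S$, together with a Minkowski-type comparison $\xi_{0,a}(V)\le M$ at the level of volumes (or an inequality proven inductively via the flow). We would try to use $\eta(V)\le S$ and $\eta'(V)\le$ mean-curvature quantities controlled by $M/S$.

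\textbf{Main obstacle.} The hard step is this pointwise-in-time comparison: bounding $\eta(V)\eta'(V)\int F$ by the discarded Cauchy--Schwarz/Gauss--Bonnet slack. The plan is to first check the case $N=\HH^3(a)$ with $\Gamma$ a sphere, where everything is explicit and $\Phi\equiv 0$ must hold; this fixes the correct constants. The general case would then follow from Kleiner's isoperimetric inequality and monotonicity of $\eta_{0,a}$.

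\textbf{Equality.} Equality forces every inequality used to be an equality along the flow: $K_N\equiv a$ on $\Omega$, $H$ constant, and isoperimetric equality. Hence $\Omega$ is isometric to a ball in $\HH^3(a)$.
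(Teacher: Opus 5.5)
Your $\Phi$ is exactly the auxiliary function the paper arrives at, and your Step 2 correctly locates what must be proved. The gap is that the proposal stops at the decisive step.

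Take speed $F=G/H$ (your factor $2$ only rescales time). Then $\mathrm{Ric}(\nu)\le 2a$ gives $\frac{d}{dt}M_t\le -2\int G^2/H+2a\int F$. Cauchy--Schwarz in the form $\mathcal{G}_t^2\le M_t\int G^2/H$, Gauss--Bonnet, and $\eta_{0,a}\eta_{0,a}'=\xi_{0,a}$ then give
\[
\Phi'(t)\le -4\mathcal{G}_t\bigl(\mathcal{G}_t-4\pi+aS(\Gamma_t)\bigr)-4a\bigl(M_t-\xi_{0,a}(V(\Omega_t))\bigr)\tfrac{d}{dt}V(\Omega_t).
\]
The usable slack is the term $2a\int F$ that Ghomi--Spruck discard, not a Cauchy--Schwarz slack like $M^2\le S\int H^2$. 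So $\Phi'\le 0$ requires exactly $M(\Gamma_t)\ge \xi_{0,a}(V(\Omega_t))$ along the flow.

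That inequality is itself a corollary of the theorem you are proving (combine the theorem with Kleiner's inequality). It does not follow from Kleiner and monotonicity of $\eta_{0,a}$. Kleiner together with Ghomi--Spruck only yields $M\ge\sqrt{16\pi\eta_{0,a}(V)-2a\eta_{0,a}(V)^2}$. This is strictly below $\xi_{0,a}(V)=\sqrt{16\pi\eta_{0,a}(V)-4a\eta_{0,a}(V)^2}$ whenever $a<0$ and $V>0$. Checking spheres in $\HH^3(a)$ only fixes constants and says nothing about a general $\Gamma_t$. As written, the monotonicity argument is circular.

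The missing idea is a bootstrap.
\begin{itemize}
\item Suppose $M\ge Q_n(V)$ is known for all smooth strictly convex surfaces, hence for every $\Gamma_t$, which stays strictly convex. Start from $Q_1=\sqrt{16\pi\eta_{0,a}-2a\eta_{0,a}^2}$.
\item Run your argument with $\phi_n=M^2-16\pi S+2aS^2+4a\int_0^{V}Q_n$. The same computation, with $\xi_{0,a}$ replaced by $Q_n$, gives $\phi_n'\le 0$. Hence $M^2\ge 16\pi S-2aS^2-4a\int_0^V Q_n$.
\item Kleiner upgrades this to $M\ge Q_{n+1}(V)$, where $Q_{n+1}(x)=\sqrt{16\pi\eta_{0,a}(x)-2a\eta_{0,a}(x)^2-4a\int_0^x Q_n}$.
\item The $Q_n$ increase and are bounded. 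Their limit solves $Q_\infty^2+4a\int_0^x Q_\infty=16\pi\eta_{0,a}-2a\eta_{0,a}^2$, the same integral equation that $\xi_{0,a}$ satisfies.
\item The paper identifies $Q_\infty=\xi_{0,a}$. The upper bound comes from testing the limiting inequality on geodesic spheres of $\HH^3(a)$; the lower bound comes from differentiating the integral equation.
\item Finally $-4a\int_0^V\xi_{0,a}=-2a\eta_{0,a}(V)^2$ gives the theorem.
\end{itemize}

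Only once $M\ge\xi_{0,a}(V)$ is established is your $\Phi$ monotone, and that is what the equality analysis relies on. Two corrections to your equality sketch. Equality in the Cauchy--Schwarz step gives $G_t/H_t$ constant on each slice, not $H$ constant. So the $\Gamma_t$ are parallel and $\Gamma$ is a geodesic sphere. Constant curvature $a$ on $\Omega$ then follows from $\mathrm{Ric}(\nu_t)\equiv 2a$ only via a rigidity lemma.
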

	
	Theorem \ref{thm:minkowski1} immediately gives a lower bound of total mean curvature depending only on the enclosed volume.
	
	\begin{cor}\label{cor:minkowski1}
		Let $\Omega$ be a domain in a Cartan-Hadamard 3-space $N$ with curvature $K\leq a\leq 0$, such that its boundary $\Gamma$ is a smooth strictly convex surface. Then 
		\begin{equation}
			\label{eq:cor-minkowski1}
			M(\Gamma)\geq \xi_{0,a}(V(\Omega)),
		\end{equation}
		where $\xi_{0,a}$ is the total mean curvature profile function of  $\HH^3(a)$.
		Equality holds only if $\Omega$ is isometric to a ball in $\HH^3(a)$.
	\end{cor}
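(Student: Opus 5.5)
The plan is to combine Theorem \ref{thm:minkowski1} with the sharp isoperimetric inequality in Cartan--Hadamard $3$-spaces with curvature $K\leq a\leq 0$, and then to identify the resulting expression with $\xi_{0,a}$ by evaluating the theorem on geodesic balls of $\HH^3(a)$.

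\emph{Step 1: isoperimetric input.} I will invoke Kleiner's isoperimetric theorem, which holds for Cartan--Hadamard $3$-manifolds with $K\leq a\leq 0$ (Kleiner for the general case, with the $a=0$ case being the Weil/Kleiner Euclidean comparison). It gives
\[
S(\Gamma)\geq \eta_{0,a}(V(\Omega)).
\]
This is a result from the literature rather than from the paper, and it is the only external ingredient. I would cite it explicitly; if one wants to stay closer to the paper's methods, the strictly convex case could alternatively be handled by the same harmonic mean curvature flow. The substantive difficulty of the corollary lies entirely here.

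\emph{Step 2: monotonicity.} The function $h(s)=16\pi s-2as^2$ is nondecreasing on $[0,\infty)$ because $a\leq 0$. Applying this to Step 1 and then to \eqref{eq:minkowski1} gives
\[
M(\Gamma)^2\geq 16\pi S-2aS^2-2a\eta^2\geq 16\pi\eta-4a\eta^2,
\qquad \eta:=\eta_{0,a}(V(\Omega)).
\]

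\emph{Step 3: identifying the profile.} Let $B$ be a geodesic ball in $\HH^3(a)$ with volume $V(\Omega)$. Its boundary has area exactly $\eta$, and $B$ satisfies the equality case of Theorem \ref{thm:minkowski1}. Rather than extracting this from the theorem's equality statement, it is safer to verify it directly. For a sphere of radius $r$ in $\HH^3(a)$ with $a=-\kappa^2$, one has $S=4\pi\sinh^2(\kappa r)/\kappa^2$ and $H=2\kappa\coth(\kappa r)$. Hence
\[
M^2=64\pi^2\sinh^2\cosh^2/\kappa^2,
\]
while
\[
16\pi S-4aS^2=\frac{64\pi^2}{\kappa^2}\left(\sinh^2+\sinh^4\right),
\]
and these agree. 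Therefore $\xi_{0,a}(V)^2=16\pi\eta-4a\eta^2$, and \eqref{eq:cor-minkowski1} follows from Step 2.

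\emph{Step 4: rigidity.} Suppose equality holds in \eqref{eq:cor-minkowski1}. Then every inequality in Step 2 is an equality. In particular, equality holds in \eqref{eq:minkowski1}, so the rigidity part of Theorem \ref{thm:minkowski1} shows that $\Omega$ is isometric to a ball in $\HH^3(a)$.
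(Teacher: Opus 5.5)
Your proposal is correct and is essentially the paper's argument. The paper also obtains the corollary by combining \eqref{eq:minkowski1} with Kleiner's inequality (which the paper states as Theorem \ref{thm:isoperi1}), the monotonicity of $s\mapsto 16\pi s-2as^2$, and the identity $\xi_{0,a}^2=16\pi\eta_{0,a}-4a\eta_{0,a}^2$ from \eqref{SM.16pi-4a}; you re-derive that identity by hand, with the case $a=0$ handled by the Euclidean formulas. Your rigidity step is sound and not circular, because the paper's proof of rigidity in Theorem \ref{thm:minkowski1} uses only the corollary's inequality; equality in your chain also forces $S(\Gamma)=\eta_{0,a}(V(\Omega))$, so Kleiner's equality case alone would equally suffice.
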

	
	Inequality \eqref{eq:minkowski1} is a refinement of \eqref{eq:minkowski2}. Compared with the previously known inequalities, \eqref{eq:minkowski1} appears to give the sharpest lower bound of total mean curvature currently known in Cartan-Hadamard 3-spaces, including hyperbolic 3-space.

	Total mean curvature enters naturally both in convex geometry and in variational problems for area. It is the main term of one of the quermassintegrals for convex bodies in space forms, which are important in convex geometry and Brunn-Minkowski Theory \cite{schneider2014}. It also plays an important role in the definition of Brown-York quasi-local mass in general relativity \cite{brown-york1993}. A longstanding problem related to the Minkowski inequality \eqref{eq:minkowski0} is the still-open question of its validity for general mean-convex domains. 
	Guan-Li \cite[Theorem 2]{guan-li2009} proved that \eqref{eq:minkowski0} holds provided that $\Gamma$ is star-shaped and mean-convex. G. Huisken showed that \eqref{eq:minkowski0} holds for outward-minimizing surfaces, see \cite[Theorem 6]{guan-li2009} and also \cite{huisken2009}. Dalphin-Henrot-Masnou-Takahashi \cite[Theorem 1.1]{dalphin2016} established \eqref{eq:minkowski0} in the case where $\Gamma$ is axially symmetric and such that $\Gamma \cap P$ is connected for every affine plane $P$ orthogonal to the axis of symmetry.
	
	Non-sharp inequalities involving the two quantities, total mean curvature $M$ and surface area $S$, have been studied. For example, $ M(\Gamma)\geq \sqrt{-a}\,S(\Gamma)$ by Gallego-Solanes \cites{gallego-solanes} in $\HH^3(a)$ (note that in \cite{gallego-solanes}, $H:=\textup{trace}(\mathrm{I\!I}_\Gamma)/(n-1)$). 
	In this paper, we establish a sharp result involving three quantities: total mean curvature $M$, surface area $S$, and the volume $V$ enclosed by the surface for convex surfaces in Cartan-Hadamard 3-spaces. 
	
	A sharp inequality between total mean curvature $M$ and surface area $S$ remains unknown. Santal\'{o} conjectured \cite{santalo1963} that (see  \cite[p. 78]{santalo2009}), in hyperbolic space $\HH^3(a)$ with constant curvature $a \leq 0$ we have
	\begin{equation}\label{eq:santalo-conj1}
	M(\Gamma) \geq \sqrt{16\pi S(\Gamma)-4a S(\Gamma)^2}.  
	\end{equation}
	The proposed right-hand side of \eqref{eq:santalo-conj1} is exactly the value attained by a geodesic sphere with area \(S(\Gamma)\). Thus, if \eqref{eq:santalo-conj1} holds, geodesic spheres minimize total mean curvature among convex surfaces with fixed area. However, an example by Naveira-Solanes  \cite[p. 815]{santalo2009}, see  \cite[p. 109]{natario2015} or \cite[Note 1.3]{ghomi-spruck2023}, shows that \eqref{eq:santalo-conj1} is false in general. They showed that a flat double disk, which is isometric to a geodesic sphere in a totally geodesic plane $\HH^2$ embedded in $\HH^3$, gives a counterexample to \eqref{eq:santalo-conj1} when its surface area $S(\Gamma)$ is large enough. Its two flat faces are counted in the surface area and its circular edge is counted in the singular total mean curvature.
	
	In $\mathbb H^3$, for a given area, the minimizer of the total mean curvature among convex surfaces with fixed surface area exists by Blaschke selection theorem. The optimal horo-convex minimizer of total mean curvature $M$ with fixed surface area $S$ has been proved to be the geodesic sphere \cite[Theorem 6.1]{ge-wang-wu2014}. Yet, the shape of the general convex minimizer is not known. Hence \emph{Santal\'{o}'s problem}, which asks for the optimal convex surface with the minimum total mean curvature $M$ among convex surfaces with fixed surface area $S$, is still open. Recently the author \cite{Hong2026} showed that there exists a new family of counterexamples to \eqref{eq:santalo-conj1} in addition to flat double disks. Santal\'{o}'s problem is of interest not only in geometry but also in general relativity. For example, the total mean curvature in hyperbolic space is also used in the definition of Wang-Yau's quasi-local mass in \cite[Theorem 1.3]{wang-yau2007}.
	
	A natural counterpart of total mean curvature in hyperbolic space $\HH^{n+1}$ is the  \emph{quermassintegral} $A_1$. For any compact convex set $\Omega$ in $\HH^{n+1}$, $A_1(\Omega)$ is defined by
	\begin{align}\label{defn:A_1}
		A_1(\Omega) := M(\Gamma) - n V(\Omega) 
	\end{align}
	where $\Gamma$ is the boundary of $\Omega$, and $V(\Omega)$ denotes the volume of $\Omega$. In \cite{Brendle-Guan-Li}, Brendle, Guan and Li proved that, for any domain $\Omega$ with smooth mean convex boundary $\Gamma$ in $\HH^3$, 
	\begin{equation}\label{eq:minkowski3-A1}
		A_1(\Omega) \geq \sqrt{S(\Gamma)}\sqrt{S(\Gamma)+4\pi} + 4\pi \operatorname{arcsinh}\left( \sqrt{\frac{S(\Gamma)}{4\pi}} \right)
		,
	\end{equation}
	where we say a hypersurface $\Gamma$ is \emph{mean convex} if the mean curvature $H$ is non-negative on $\Gamma$. Equality holds only when $\Gamma$ is a geodesic sphere in $\HH^3$. By \eqref{defn:A_1}, the inequality \eqref{eq:minkowski3-A1} can be formulated as
	\begin{equation}\label{eq:minkowski3}
		M(\Gamma) \geq \sqrt{S(\Gamma)}\sqrt{S(\Gamma)+4\pi} + 4\pi \operatorname{arcsinh}\left( \sqrt{\frac{S(\Gamma)}{4\pi}} \right)
		+ 2 V(\Omega).
	\end{equation}
	 
	%Inequality \eqref{eq:minkowski1} is sharper than inequality \eqref{eq:minkowski3}  when the isoperimetric profile function $\eta_{0,a}(V)$ is sufficient small comparing to the surface area.
	In fact, inequality \eqref{eq:minkowski1} proved in this paper is also sharper than \eqref{eq:minkowski3} for any convex surface $\Gamma$ in $\HH^3$, unless it is a geodesic sphere, see Section \ref{sec:4}.
	
	\section{Preliminaries}\label{sec:1}
	
		\subsection{Notation and Definitions of Convexity}
		Here we list some notation and definitions for convexity used in this paper. By \emph{smooth} we mean $\C^\infty$. The term \emph{curvature} means sectional curvature unless specified otherwise. A \emph{domain} is a connected open set with compact closure.
		Assuming smoothness, a \emph{convex} hypersurface $\Gamma$ of an ambient manifold $N$ is a closed embedded submanifold of codimension one which, when properly oriented, has positive semidefinite second fundamental form $\mathrm{I\!I}_\Gamma$. A \emph{strictly convex} hypersurface $\Gamma$ of $N$ is a convex hypersurface with positive second fundamental form. %A \emph{mean convex} hypersurface is a closed embedded submanifold of codimension one which has non-negative mean curvature. 
	
		\subsection{Notation and Facts about Profile Functions}
		Here we list some facts about profile functions in hyperbolic space $\HH^3(a)$ with constant curvature $a \leq 0$. For a geodesic sphere $\SSS(r)$ with radius $r$ in $\HH^3(a)$, we denote its enclosed volume, area, and total mean curvature as functions of $r$ by $V_{a}^{B}(r) $, $S_{a}^{B}(r)$ and $M_{a}^{B}(r)$, respectively. 
		
		It is well known that
		\begin{align}\label{note1.defn.VSM}
			V(\SSS(r)) &= 2\pi \frac{1}{\left(\sqrt{-a}\right)^3}
			\left( \sinh(\sqrt{-a}r) \cosh(\sqrt{-a}r) - \sqrt{-a}r \right) =: V_{a}^{B}(r),
			\\
			\nonumber
			S(\SSS(r)) &= 4\pi \frac{1}{\left(\sqrt{-a}\right)^2} \sinh(\sqrt{-a}r)^2 =: S_{a}^{B}(r),
			\\
			\nonumber
			M(\SSS(r)) &= 8\pi \frac{1}{\sqrt{-a}} \sinh(\sqrt{-a}r) \cosh(\sqrt{-a}r) =: M_{a}^{B}(r),
		\end{align}
		and in particular,
		\begin{align}\label{note1.eq.derivative}
			(V_{a}^{B})'(r) &= S_{a}^{B}(r), \quad
			(S_{a}^{B})'(r) = M_{a}^{B}(r).
		\end{align}
		
		By the definition of the isoperimetric profile function $\eta_{0,a}(x)$ and the total mean curvature profile function $\xi_{0,a}(x)$, we have for any $r\geq0$,
		\begin{align}\label{note1.defn.SM}
			\eta_{0,a}(V_{a}^{B}(r)) &= S_{a}^{B}(r),\\
			\nonumber
			\xi_{0,a}(V_{a}^{B}(r)) &= M_{a}^{B}(r).
		\end{align}

		From \eqref{note1.defn.VSM}, we have for any $x\geq 0$,
		\begin{align}\label{SM.16pi-4a}
			\xi_{0,a}(x) = \sqrt{ 16\pi \eta_{0,a}(x) - 4 a (\eta_{0,a}(x))^2 }.
		\end{align}
		
		We also have the following lemma.
		\begin{lem}
			For any $a \leq 0$, and any $x\geq0$, we have
			\begin{equation}\label{SM.lemma}
				\eta_{0,a}'(x) \eta_{0,a}(x) = \xi_{0,a}(x),
			\end{equation}
			where the functions $\eta_{0,a}$ and $\xi_{0,a}$ are as defined in \eqref{note1.defn.SM}.
		\end{lem}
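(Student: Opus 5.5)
Since $V_a^B$ is a strictly increasing bijection of $[0,\infty)$ onto itself, with positive derivative $S_a^B(r)>0$ for $r>0$, I will prove the identity by reparametrizing by the radius. Every $x\ge 0$ is $x=V_a^B(r)$ for a unique $r\ge0$. The inverse function theorem then makes $\eta_{0,a}=S_a^B\circ (V_a^B)^{-1}$ differentiable for $x>0$. The case $x=0$ is handled by a limit or one-sided derivative at the end.

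The key step is the chain rule applied to $\eta_{0,a}(V_a^B(r))=S_a^B(r)$ from \eqref{note1.defn.SM}. Differentiating in $r$ and using \eqref{note1.eq.derivative} gives
\begin{equation*}
\eta_{0,a}'(V_a^B(r))\,S_a^B(r)=\eta_{0,a}'(V_a^B(r))\,(V_a^B)'(r)=(S_a^B)'(r)=M_a^B(r).
\end{equation*}
The left side equals $\eta_{0,a}'(x)\,\eta_{0,a}(x)$, again by \eqref{note1.defn.SM}. The right side is $\xi_{0,a}(x)$ by the second relation in \eqref{note1.defn.SM}. This is exactly \eqref{SM.lemma}.

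The only delicate points are the verification of \eqref{note1.eq.derivative} and the boundary case $a=0$.

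\begin{itemize}
\item For $a<0$, one checks \eqref{note1.eq.derivative} directly from \eqref{note1.defn.VSM}. Write $k=\sqrt{-a}$. Then $\frac{d}{dr}(\sinh kr\cosh kr-kr)=k(\cosh^2 kr+\sinh^2 kr-1)=2k\sinh^2 kr$, so $(V_a^B)'(r)=4\pi k^{-2}\sinh^2 kr=S_a^B(r)$. Also $(S_a^B)'(r)=8\pi k^{-1}\sinh kr\cosh kr=M_a^B(r)$.
\item For $a=0$, the formulas \eqref{note1.defn.VSM} are read as their limits $k\to0$, namely $V=\tfrac43\pi r^3$, $S=4\pi r^2$ and $M=8\pi r$, and the derivative relations are immediate.
\end{itemize}

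At $x=0$ (that is, $r=0$) both sides vanish if $\eta_{0,a}'(0)$ is finite, since $\eta_{0,a}(0)=0$ and $\xi_{0,a}(0)=0$. However, $\eta_{0,a}(x)\sim c x^{2/3}$ near $0$, so $\eta_{0,a}'$ blows up there. I would therefore interpret the identity at $x=0$ as the limit of the product. That limit is $\lim_{r\to0}M_a^B(r)=0=\xi_{0,a}(0)$, which is consistent. I expect this interpretation at the endpoint to be the only mild obstacle; everything else is a routine chain-rule computation.
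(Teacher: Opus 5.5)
Your proposal is correct and follows the paper's own proof: differentiate $\eta_{0,a}(V_a^B(r))=S_a^B(r)$ in $r$ and substitute $(V_a^B)'=S_a^B$ and $(S_a^B)'=M_a^B$. Your additional remarks fill in points the paper leaves implicit: differentiability of $\eta_{0,a}$ via the inverse function theorem, the explicit check of the derivative identities including the $a=0$ limit, and reading $x=0$ as a limit because $\eta_{0,a}'$ blows up there (the paper's argument in fact only establishes the identity for $x>0$).
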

		\begin{proof}
			By definition, we have for any $r>0$,
			$\eta_{0,a}\left( V_{a}^{B}(r) \right) = S_{a}^{B}(r)$.
			Differentiating both sides with respect to $r$, we have
			\begin{equation}\label{SM.lemma.temp.2}
				\eta_{0,a}'\left( V_{a}^{B}(r) \right) (V_{a}^B)'(r) = (S_{a}^{B})'(r).
			\end{equation}
			By \eqref{note1.eq.derivative}, we obtain from \eqref{SM.lemma.temp.2} that
			$
			\eta_{0,a}'\left( V_{a}^{B}(r) \right) S_{a}^{B}(r) = M_{a}^{B}(r),
			$
			that is,
			$$
			\eta_{0,a}'\left( V_{a}^{B}(r) \right) \eta_{0,a}\left( V_{a}^{B}(r) \right) = \xi_{0,a}\left( V_{a}^{B}(r) \right),
			$$
			that is, for any $x>0$,
			$
			\eta_{0,a}'(x) \eta_{0,a}(x) = \xi_{0,a}(x).
			$
		\end{proof}
		
		The isoperimetric profile function $\eta_{0,a}$ plays an important role in geometric inequalities in Cartan-Hadamard 3-spaces. We state the following isoperimetric inequality due to Kleiner \cite{kleiner1992}.
		\begin{thm}[B. Kleiner, 1992]\label{thm:isoperi1}
			Let $\Omega$ be a domain in a Cartan-Hadamard 3-space $N$ with curvature $K\leq a\leq 0$, such that its boundary $\Gamma$ is a smooth surface. Then
			\begin{equation}
				\label{eq:isoperi1}
				S(\Gamma) \geq \eta_{0,a}(V(\Omega)).
			\end{equation}
			Equality holds only if $\Omega$ is isometric to a ball in $\HH^3(a)$.
		\end{thm}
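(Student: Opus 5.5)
The approach I would take is Kleiner's variational method: study the isoperimetric profile $I(v):=\inf\{S(\partial\Omega'):\Omega'\subset N,\ V(\Omega')=v\}$ of $N$, derive a differential inequality for $I$ at every volume, and compare it by an ODE argument with $\eta_{0,a}$, which satisfies the corresponding \emph{equality}. First I would show that for each $v>0$ an isoperimetric region exists, after a harmless exhaustion or compactness argument since $N$ is noncompact. By geometric measure theory its boundary $\Sigma$ is a smooth embedded surface of constant mean curvature $H$; in dimension $3$ there are no singularities. Pushing $\Sigma$ outward by normal variations gives the one-sided bound $\overline{D}^{+}I(v)\le H$ in the barrier sense. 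So it suffices to bound $H$ from below in terms of $S(\Sigma)=I(v)$.

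The key geometric step is the estimate
\begin{equation*}
\int_{\Sigma^+}(k_1k_2 + a)\,d\mu \;\geq\; 4\pi,
\end{equation*}
where $\Sigma^+\subset\Sigma$ is the set of points at which $\Sigma$ is locally supported from outside by a sphere of the ambient curvature comparison. On $\Sigma^+$ the principal curvatures satisfy $k_1,k_2\ge 0$. The model case explains the constant: for a sphere in $\HH^3(a)$ one has $k_1k_2 = -a\coth^2(\sqrt{-a}r)$, and integrating against area $4\pi\sinh^2(\sqrt{-a}r)/(-a)$ gives exactly $4\pi$ for $\int(k_1k_2+a)$. To prove the estimate I would use an outer support map. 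For each unit direction $\theta\in T_pN$ at a fixed interior point $p$, take the largest-distance point of $\Sigma$ along geodesic cones, or use support by horospheres or large geodesic spheres. This gives a map from $\Sigma^+$ onto the unit sphere. By the Gauss equation with $K\le a$ and Jacobi field (Rauch) comparison, its Jacobian is bounded by $(k_1k_2+a)$ in the normalized sense. Since the map is surjective, the area formula yields the bound $4\pi$.

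Given this, I use $k_1k_2\le H^2/4$ and $\mathrm{area}(\Sigma^+)\le S(\Sigma)$ to get
\begin{equation*}
4\pi\le \Big(\frac{H^2}{4}+a\Big)S(\Sigma),\qquad\text{i.e.}\qquad H\ge\sqrt{\frac{16\pi}{I(v)}-4a}.
\end{equation*}
Multiplying by $I$ gives $I\,\overline{D}^{+}I\le\ldots$ in the wrong direction as stated, so I would rewrite the comparison correctly. By \eqref{SM.16pi-4a} and \eqref{SM.lemma}, $\eta_{0,a}$ satisfies $\eta'\eta=\sqrt{16\pi\eta-4a\eta^2}$, i.e. $\eta'=\sqrt{16\pi/\eta-4a}$, which is a decreasing function of $\eta$. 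The direct step is to compare $I$ with $\eta_{0,a}$. Suppose $I(v_0)<\eta_{0,a}(v_0)$, and let $v_1<v_0$ be the last volume with $I=\eta$; this exists since both functions behave like the Euclidean profile as $v\to0$, by small-scale Euclidean asymptotics. On $(v_1,v_0)$ we have $I<\eta$, so $H\ge\sqrt{16\pi/I-4a}>\eta'$. Here I must use the correct variational inequality, namely $\underline{D}^{-}I\ge H$ as well. This follows from the inward variation together with the fact that $I$ is continuous and increasing, because in Cartan--Hadamard space $I$ is nondecreasing. This forces $I-\eta$ to increase, which is a contradiction.

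For rigidity in the equality case, all inequalities must be equalities. Then $k_1=k_2$ everywhere, $\Sigma^+=\Sigma$, and the Rauch comparison is sharp, so $K\equiv a$ in $\Omega$ and $\Omega$ is a ball in $\HH^3(a)$. I expect the main obstacle to be the Gauss-map estimate $\int_{\Sigma^+}(k_1k_2+a)\ge 4\pi$. This requires building the right surjective support map in a variable-curvature Cartan--Hadamard space and controlling its Jacobian via comparison geometry. The regularity issues are secondary, and so is the two-sided differential inequality for the only continuous function $I$.
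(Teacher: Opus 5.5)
The paper gives no proof of this statement; it quotes it from Kleiner, so the benchmark is Kleiner's argument. Your architecture is essentially his. You use a (relative) isoperimetric profile $I$ with CMC boundaries $\Sigma$. A total-curvature bound on the ``convex part'' $\Sigma^+$ gives $H\ge\sqrt{16\pi/I-4a}$, and you compare with $\eta_{0,a}'=\sqrt{16\pi/\eta_{0,a}-4a}$ via $\underline{D}^{-}I\ge H$. The comparison step is fine with your own correction. Note that $v_1$ exists simply because $I(0)=\eta_{0,a}(0)=0$ and $I$ is continuous; matching leading-order Euclidean asymptotics would not decide the sign of $I-\eta_{0,a}$.

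The genuine gap is the estimate $\int_{\Sigma^+}(k_1k_2+a)\,d\mu\ge 4\pi$, which carries the whole content of the theorem. The mechanism you sketch for it would not produce it:
\begin{itemize}
\item Rauch comparison along geodesics from $p$ controls the Jacobian of radial projection $\Sigma\to T^1_pN$. That Jacobian depends on $d(p,x)$ and on the angle between $\nu$ and the radial direction, not on $k_1k_2$.
\item Suppose instead you send $x$ to the endpoint $\xi$ of its inward normal ray (support by horospheres). Already in $\HH^3(a)$, the Jacobian with respect to the visual measure at $p$ is $(k_1-\sqrt{-a})(k_2-\sqrt{-a})\,e^{2\sqrt{-a}(b_\xi(x)-b_\xi(p))}$, where $b_\xi$ is the Busemann function increasing away from $\xi$. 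This equals $k_1k_2+a$ for spheres centred at $p$ but not in general. Comparison geometry will not turn it into the pointwise bound $\mathrm{Jac}\le k_1k_2+a$ your area-formula argument needs.
\item Defining $\Sigma^+$ by \emph{local} support is not what any surjectivity argument can use. You need global support, i.e.\ contact with the convex hull.
\end{itemize}

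The missing idea is intrinsic, and it is the route Kleiner takes. Set $\Sigma^+:=\Sigma\cap\partial\,\mathrm{conv}(\Omega)$ and apply Gauss--Bonnet to the hull boundary, regularized for instance through its $C^{1,1}$ outer parallel surfaces. This is the same Gauss-equation/Gauss--Bonnet mechanism as in \eqref{eq:GB}. By the Gauss equation, $K_{\mathrm{int}}=G+K_N(T\,\cdot)\le G+a$. Off $\Sigma$ the hull boundary cannot be strictly convex at any point, since otherwise one could shave off a cap. So there $G=0$ and $K_{\mathrm{int}}\le a\le 0$, hence $4\pi=\int K_{\mathrm{int}}\le\int_{\Sigma^+}(G+a)\,d\mu$.

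Making this rigorous in a Riemannian manifold is the actual work. You need regularity of the hull and vanishing of its Gauss--Kronecker curvature off $\Sigma$. You also need to run everything for the relative problem in a large convex ball, where minimizers may touch the obstacle and one only has $H\le H_0$ on the contact set. Your proposal either calls these points ``harmless'' or leaves them open.

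Your rigidity step also does not follow. Equality in this chain only gives that $\Sigma$ is umbilic, $\Sigma^+=\Sigma$, and $K_N(T_p\Sigma)=a$ for $p\in\Sigma$. It says nothing about planes containing $\nu$ or about the curvature inside $\Omega$. So ``$K\equiv a$ in $\Omega$'' requires a separate argument, for example propagating equality along a foliation of $\Omega$ and invoking a rigidity lemma, as in Step~4 of the paper.
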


		\subsection{Notation and Facts about Geometric Flows}
		Here we list some evolution equations along geometric flows.
		A \emph{geometric flow}  of a hypersurface $\Gamma$ in a Riemannian $(n+1)$-manifold $N$ \cite{andrews-chow2020,giga2006,huisken-polden1999} is a one-parameter family of immersions $X\colon\Gamma\times[0,T)\to N$, $X_t(\cdot):=X(\cdot, t)$,  given by
		\begin{equation}\label{eq:hmcf}
			X_t'(p)=-F_t(p)\nu_t(p),\quad\quad\quad X_0(p)=p,   
		\end{equation}
		where $(\cdot)':=\partial/\partial t(\cdot)$,
		$\nu_t$ is a normal vector field along $\Gamma_t:=X_t(\Gamma)$, and the \emph{speed function} $F_t$ depends on \emph{principal curvatures}, that is, eigenvalues $\kappa_i^t$  of the second fundamental form $\mathrm{I\!I}_t:=\mathrm{I\!I}_{\Gamma_t}$. %More precisely, $\nu_t(p)$ is the normal vevtor and $\kappa_i^t(p)$ are the principal curvatures of $\Gamma_t$ at the point $X_t(p)$. 
		
		Let $d\mu_t$ be the area element induced on $\Gamma$ by $X_t$. $G_t:=\det(\mathrm{I\!I}_t)$ and $H_t:=\textup{trace}(\mathrm{I\!I}_t)$ are the \emph{Gauss-Kronecker curvature} and \emph{mean curvature} of $\Gamma_t$, respectively. Let $\Omega_t$ be the domain enclosed by $\Gamma_t$ in $N$. Let $V(\Omega_t)$ be the volume of $\Omega_t$ in $N$.
		By \cite[Thm. 3.2(v)]{huisken-polden1999} and \cite[Lem. 7.4]{huisken-polden1999} (see also for example \cite{andrews1994}, \cite{ghomi-spruck2023}), for any geometric flow \eqref{eq:hmcf},
			\begin{align}
				\label{eq:evolution}
			\frac{d}{dt}(H_t) &= 
			\Delta_t F_t+\left(|\mathrm{I\!I}_t|^2+\operatorname{Ric}(\nu_t)\right)F_t,
			\\
			\nonumber
			\frac{d}{dt}(d\mu_t) &= -F_tH_td\mu_t,
			\\
			\nonumber
			\frac{d}{dt}V(\Omega_t) &= -\int_{\Gamma} F_t d\mu_t,
			\\
			\nonumber
			\frac{d}{dt} S(\Gamma_t) &= -\int_{\Gamma} F_t H_t d\mu_t,
		\end{align}
		where $|\mathrm{I\!I}_t|:=\sqrt{\sum(\kappa_i^t)^2}$, $\Delta_t$ is the Laplace-Beltrami operator induced on $\Gamma$ by $X_t$, and $\operatorname{Ric}(\nu_t)$ is the Ricci curvature of $N$ at the point $X_t(p)$ in the direction of $\nu_t(p)$, that is, the sum of sectional curvatures of $N$ with respect to a pair of orthogonal planes that contain $\nu_t(p)$.
		
		%Let $H$ be the function on $\Omega\setminus\{o\}$ given by $H(X_t(p)):=H_t(p)$. Also 
		%define $u$ on $\Omega\setminus\{o\}$ by  $u(X_t(p))=t$, which yields that $|\nabla u(X_t)|=1/F_t$. Then $H=\textup{div}(\nabla u/|\nabla u|)$, and Stokes' theorem together with the coarea formula yields that
		%$$
		%S(\Gamma_t)-S(\Gamma_{t+h})=\int_{\Omega_t\setminus\Omega_{t+h}}H
		%=
		%\int_t^{t+h}\left(\int_{\Gamma}H_sF_s\,d\mu_s\right)ds
		%$$
		%where $\Omega_t$ is the convex domain bounded by $\Gamma_t$. Hence
		%\begin{align}\label{eq:evolution-area}
		%	\frac{d}{dt} S(\Gamma_t) &= -\int_{\Gamma} F_t H_t d\mu_t.
		%\end{align}

\section{Proof of Theorem \ref{thm:minkowski1} and Corollary \ref{cor:minkowski1}}\label{sec:2}

Following \cite{ghomi-spruck2023}, we will prove Theorem \ref{thm:minkowski1} via harmonic mean curvature flow. If, in a geometric flow \eqref{eq:hmcf}, the speed \(F_t\) is the harmonic mean of the \(\kappa_i^t\), that is,
$$
F_t=\left(\sum_{i=1}^{n}\frac{1}{\kappa_i^t}\right)^{-1},
$$
then $X$ is called the \emph{harmonic mean curvature flow} of $\Gamma$. In particular, when $n=2$,
$$
F_t=\frac{G_t}{H_t}.
$$

Xu showed \cite[Thm. 1.2]{xu2010, gulliver-xu2009} that, if $\Gamma$ is a smooth strictly convex hypersurface in a Cartan-Hadamard space $N$ and $F_t$ is the harmonic mean curvature, $X$ exists for $t\in [0,T)$, is $\C^\infty$, and $\Gamma_t$ are strictly convex hypersurfaces converging to a point as $t\to T$. Ghomi-Spruck \cite{ghomi-spruck2023} emphasized that this makes harmonic mean curvature flow the natural contracting flow for this setting. In Cartan-Hadamard spaces, this is the only geometric flow known to preserve the convexity of a hypersurface in $N$ while contracting it to a point. 

	Fix a smooth strictly convex initial surface \(\Gamma\) in a Cartan-Hadamard 3-space $N$ with curvature $K\leq a\leq 0$. Let \(\Gamma_t\) denote its evolution surface along harmonic mean curvature flow, defined up to the extinction time \(T\). Thus, $\Gamma_t$ converges to a point $o$ in $N$ as $t\to T$. Set $M_t:=M(\Gamma_t)$. The key idea of Ghomi-Spruck's proof of \eqref{eq:minkowski2} in \cite{ghomi-spruck2023} is the following monotonicity:
	\begin{prop}
		Along harmonic mean curvature flow, the function
		\begin{align}\label{phi:ghomi-spruck}
			\phi(t) := M_t^2-16\pi S(\Gamma_t) +2a S(\Gamma_t)^2
		\end{align}
		is monotonically non-increasing.
	\end{prop}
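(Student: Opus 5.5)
The plan is to differentiate $\phi$ along the flow using the evolution equations \eqref{eq:evolution} with $F_t=G_t/H_t$, and to show that $\phi'(t)\le 0$ by combining Gauss--Bonnet with the curvature bound $K\le a$. First compute $M_t'$. Since $M_t=\int_\Gamma H_t\,d\mu_t$, we get
\begin{equation*}
M_t'=\int_\Gamma \Bigl(\Delta_t F_t+(|\mathrm{I\!I}_t|^2+\operatorname{Ric}(\nu_t))F_t - F_tH_t^2\Bigr)d\mu_t .
\end{equation*}
The Laplacian term integrates to zero. Using $|\mathrm{I\!I}_t|^2=H_t^2-2G_t$, this becomes
\begin{equation*}
M_t'=\int_\Gamma \bigl(\operatorname{Ric}(\nu_t)-2G_t\bigr)F_t\,d\mu_t .
\end{equation*}
We also have $S'(\Gamma_t)=-\int_\Gamma F_tH_t\,d\mu_t=-\int_\Gamma G_t\,d\mu_t$, so the area derivative involves only the total extrinsic Gauss curvature.

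Next, express $\int G_t$ through Gauss--Bonnet. By the Gauss equation, the intrinsic curvature of $\Gamma_t$ is $G_t+K(T\Gamma_t)$. Since $\Gamma_t$ is a sphere,
\begin{equation*}
\int_\Gamma G_t\,d\mu_t=4\pi-\int_\Gamma K(T\Gamma_t)\,d\mu_t\ \ge\ 4\pi-aS(\Gamma_t).
\end{equation*}
In particular $-S'\ge 4\pi-aS\ge 4\pi>0$, so $S$ is decreasing. Now
\begin{equation*}
\phi'=2M_tM_t'+(-16\pi+4aS)S' .
\end{equation*}
The second term equals $(16\pi-4aS)\int G_t$; note that $a\le0$ makes $16\pi-4aS\ge0$. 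For the first term, $\operatorname{Ric}(\nu_t)\le 2a\le 0$, so $M_t'\le -2\int G_tF_t\,d\mu_t$, and hence $2M_tM_t'\le -4M_t\int G_t^2/H_t\,d\mu_t$.

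The key step is to bound $M_t\int G_t^2/H_t$ from below. By Cauchy--Schwarz,
\begin{equation*}
\Bigl(\int G_t\Bigr)^2=\Bigl(\int \frac{G_t}{\sqrt{H_t}}\sqrt{H_t}\Bigr)^2\le \int\frac{G_t^2}{H_t}\int H_t = M_t\int \frac{G_t^2}{H_t}.
\end{equation*}
Therefore $2M_tM_t'\le -4(\int G_t)^2$. Combining the two terms gives
\begin{equation*}
\phi'\le \Bigl(\int G_t\Bigr)\Bigl(16\pi-4aS-4\int G_t\Bigr).
\end{equation*}
Gauss--Bonnet gives $4\int G_t\ge 16\pi-4aS$, so the second factor is $\le0$ while $\int G_t>0$. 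This yields $\phi'\le 0$.

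The main obstacle is keeping the signs consistent. Both the Ricci term and the Gauss--Bonnet term need the curvature bound $K\le a$, used in the same direction, and $a\le0$ is needed for the coefficient $16\pi-4aS$ to be nonnegative. The Cauchy--Schwarz step is where the specific harmonic-mean speed $F=G/H$ is essential. One should also verify the constants: $-16\pi S'$ contributes $16\pi\int G$, and $2a\cdot 2SS'$ contributes $-4aS\int G$, which matches the computation above.
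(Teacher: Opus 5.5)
Your proposal is correct and follows essentially the same argument as the paper (Ghomi--Spruck's computation, reproduced in Step 1 of the paper's proof): evolve $M_t$, apply Cauchy--Schwarz to get $M_tM_t'\le -2\G_t^2$, and close with Gauss--Bonnet $\G_t\ge 4\pi-aS(\Gamma_t)$ to reach $\phi'\le -4\G_t(\G_t-4\pi+aS(\Gamma_t))\le 0$. The only difference is that you discard $\int_\Gamma \operatorname{Ric}(\nu_t)F_t\,d\mu_t\le 2a\int_\Gamma F_t\,d\mu_t=-2a\,\frac{d}{dt}V(\Omega_t)\le 0$ outright. That step is where $a\le 0$ is genuinely needed, not the sign of $16\pi-4aS$. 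The paper instead keeps this volume term in order to sharpen the inequality.
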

	In Ghomi-Spruck's proof \cite{ghomi-spruck2023}, the volume term appearing in the evolution equation of $\phi(t)$ was neglected, and therefore the inequality is non-sharp in Cartan-Hadamard spaces with strictly negative curvature. The equality case would force the domain enclosed by $\Gamma$ to be isometric to a subset of $\RR^3$, and hence equality cannot hold in general non-Euclidean Cartan-Hadamard 3-spaces. 
	
	In this paper, we will take the volume term into consideration and refine the inequality \eqref{eq:minkowski2}. However, due to the lack of a sharp inequality comparing total mean curvature $M$ and volume $V$ in Cartan-Hadamard 3-spaces (which is Corollary \ref{cor:minkowski1}, proved below), we cannot prove \eqref{eq:minkowski1} directly by setting a single auxiliary function and proving its monotonicity along the flow. We therefore bootstrap the desired estimate through an iterative construction of auxiliary functions and proving their monotonicity along the flow. %In fact, we will use a corollary of \eqref{eq:minkowski2} comparing total mean curvature and volume in the first step of our iteration.

\begin{proof}[Proof of Theorem \ref{thm:minkowski1} and Corollary \ref{cor:minkowski1}]
	\hfill\\
	\textbf{Step 1: The first step of the iteration}
	
	Given any smooth strictly convex surface $\Gamma$ in a Cartan-Hadamard 3-space $N$ with curvature $K\leq a\leq 0$, we let $\Gamma_t$, $t\in[0,T)$, be the surfaces generated by the harmonic mean curvature flow of $\Gamma$, converging to a point $o$ in $N$. Let $\Omega_t$ be the domain enclosed by $\Gamma_t$ in $N$.
	Set $M_t:=M(\Gamma_t)$, and
	$$
	\phi_1(t):= M_t^2-16\pi S(\Gamma_t) +2a S(\Gamma_t)^2 -  P_1(V(\Omega_t)) ,
	$$
	where 
	\begin{equation}\label{defn:F_1}
		P_1(x):= -4a \int_{0}^{x} Q_1(t) dt.
	\end{equation}
	Here $Q_1(x) := \sqrt{16\pi \eta_{0,a}(x) - 2 a \eta_{0,a}(x)^2}$.
	It remains to prove non-negativity of \(\phi_1\) at the initial time. We start by differentiating it along the flow. 
	
	By \eqref{eq:evolution}, for the harmonic mean curvature speed \(F_t=G_t/H_t\), the area element evolves by $\frac{d}{dt}(d\mu_t) = -F_tH_td\mu_t = - G_t d\mu_t $. The curvature upper bound of $N$ gives \(\operatorname{Ric}(\nu_t)\le 2a\). Substitution into the evolution equation \eqref{eq:evolution} yields
	\begin{eqnarray}\label{eq:G'}\notag
		\frac{d}{dt}(M_t)&=&\int_{\Gamma}\left(\frac{d}{dt}(H_t)d\mu_t+H_t\frac{d}{dt}(d\mu_t)\right)\\ 
		&=&
		\int_{\Gamma}\Big(\Delta_t F_t+\big(|\mathrm{I\!I}_t|^2-(H_t)^2\big)F_t +\operatorname{Ric}(\nu_t)F_t\Big)d\mu_t\\ \notag
		&=&
		-2\int_{\Gamma}\frac{(G_t)^2}{H_t}d\mu_t
		+ \int_{\Gamma} \operatorname{Ric}(\nu_t) F_t d\mu_t \\ \notag
		&\leq&
		-2\int_{\Gamma}\frac{(G_t)^2}{H_t}\,d\mu_t + 2a \int_{\Gamma} F_t d\mu_t \\
		\notag
		&= &
		-2\int_{\Gamma}\frac{(G_t)^2}{H_t}\,d\mu_t - 2a \left(\frac{d}{dt}V(\Omega_t)\right).
	\end{eqnarray}
	Applying Cauchy--Schwarz to the curvature integrals gives
	\begin{align}\label{eq:CS}
		M_t \left( \frac{d}{dt} M_t \right)
		\leq
		-2 M_t\int_{\Gamma}\frac{(G_t)^2}{H_t}d\mu_t - 2a M_t \frac{d}{dt}V(\Omega_t)
		\leq
		-2\G_t^2 -2a M_t \frac{d}{dt}V(\Omega_t),
	\end{align}
	where $\G_t=\G(\Gamma_t):=\int_{\Gamma}G_t d\mu_t$ is the \emph{total Gauss-Kronecker curvature} of $\Gamma_t$.
	By \eqref{eq:evolution}, we also have
	$$
	\frac{d}{dt}S(\Gamma_t)=-\G_t.
	$$
	Thus, from the definition of $P_1$ as in \eqref{defn:F_1}, we have
	\begin{align}
		\label{eq:phi'}
		&\frac{d}{dt}\phi_1(t) 
		\\
		\nonumber
		=&
		2 M_t \frac{d}{dt}M_t
		-16\pi \frac{d}{dt}S(\Gamma_t) 
		+4aS(\Gamma_t)\frac{d}{dt}S(\Gamma_t)
		-  P_1'(V(\Omega_t)) \frac{d}{dt} V(\Omega_t)
		\\
		\nonumber
		\leq
		&
		-4 \G_t^2 -4 a M_t  \frac{d}{dt} V(\Omega_t) 
		-16\pi \frac{d}{dt}S(\Gamma_t) 
		+4aS(\Gamma_t)\frac{d}{dt}S(\Gamma_t)
		-  P_1'(V(\Omega_t)) \frac{d}{dt} V(\Omega_t)
		\\
		\nonumber
		=&
		-4 \G_t ( \G_t - 4\pi + a S(\Gamma_t))
		+
		(-4a M_t - P_1'(V(\Omega_t)) )\frac{d}{dt} V(\Omega_t)
		\\
		\nonumber
		=&
		-4 \G_t ( \G_t - 4\pi + a S(\Gamma_t))
		+
		(-4a)( M_t - Q_1(V(\Omega_t)) )\frac{d}{dt} V(\Omega_t).
	\end{align}
	
	By Gauss' equation, for all $p\in\Gamma_t$, 
	\begin{equation}\label{eq:gauss}
	G_t(p)=K_{\Gamma_t}(p)-K_N(T_p\Gamma_t),
	\end{equation}
	where $K_{\Gamma_t}$ is the sectional curvature of $\Gamma_t$, and $K_N(T_p\Gamma_t)$ is the sectional curvature of $N$ with respect to the tangent plane $T_p \Gamma_t\subset T_p N$.   Thus, by Gauss-Bonnet theorem, 
	\begin{equation}\label{eq:GB}
	\G_t=4\pi -\int_{p\in\Gamma_t} K_N(T_p\Gamma_t)\,d\mu_t\geq 4\pi-aS(\Gamma_t) .
	\end{equation}
	From \eqref{eq:minkowski2} and Theorem \ref{thm:isoperi1}, we have for any $t$, 
	\begin{equation}\label{tmp: MgeqQ1}
		M_t \geq \sqrt{16\pi S(\Gamma_t)- 2 a S(\Gamma_t)^2} \geq 
		\sqrt{16\pi \eta_{0,a}(V(\Omega_t))- 2 a \eta_{0,a}(V(\Omega_t))^2}
		= Q_1(V(\Omega_t)).
	\end{equation}
	
	Combining \eqref{eq:GB} and \eqref{tmp: MgeqQ1} and substituting them into \eqref{eq:phi'}, we obtain $\frac{d}{dt} \phi_1(t)\leq 0$ as claimed. At the extinction time, the enclosed bodies shrink to a point; in particular \(S(\Gamma_t)\to0\) and \(V(\Omega_t)\to0\). Hence
	$$
	\lim_{t\to T}\phi_1(t) \geq \liminf_{t\to T} M_t^2\geq 0.
	$$ 
	Thus $\phi_1(0)\geq 0$, which yields the first inequality in our iteration: For any smooth strictly convex $\Gamma$ in $N$, with enclosed domain $\Omega$,
	\begin{equation}\label{eq:iter1}
		M(\Gamma) \geq \sqrt{16\pi S(\Gamma) -2 a S(\Gamma)^2 + P_1(V(\Omega))}.
	\end{equation}
	
	\textbf{Step 2: General iteration}
	
	As a corollary of \eqref{eq:iter1}, by Theorem \ref{thm:isoperi1}, we have a new inequality between total mean curvature and volume. For any smooth strictly convex surface $\Gamma$ in $N$, with enclosed domain $\Omega$, we obtain
	\begin{equation}\label{tmp: MgeqQ2}
		M(\Gamma) \geq \sqrt{16\pi \eta_{0,a}(V(\Omega)) -2 a \eta_{0,a}(V(\Omega))^2 + P_1(V(\Omega))}.
	\end{equation} 
	Comparing \eqref{tmp: MgeqQ1} and \eqref{tmp: MgeqQ2}, we have refined the original inequality between total mean curvature and volume in Cartan-Hadamard 3-spaces. We may use \eqref{tmp: MgeqQ2} to construct a new auxiliary function, prove its monotonicity, and repeat the process inductively. 
	
	To state the general iteration step, we define a sequence of functions $\{Q_n\}_{n=1}^{\infty}$ on $[0,\infty)$, as follows: 
	$Q_1(x) := \sqrt{16 \pi\eta_{0,a}(x) - 2a \eta_{0,a}(x)^2}$, and for any positive integer $n$,
	\begin{equation}\label{defn:Q_n}
		Q_{n+1}(x) := \sqrt{16 \pi \eta_{0,a}(x) - 2a \eta_{0,a}(x)^2 - 4a \int_{0}^{x} Q_n(t)dt }.
	\end{equation}
	
	For any positive integer $n$, we define
	\begin{equation*}
		\phi_n(t) := M_t^2 - 16\pi S(\Gamma_t) + 2 a S(\Gamma_t)^2 - P_n(V(\Omega_t)),
	\end{equation*}
	where $P_n(x):= -4a \int_{0}^{x} Q_n(t) dt$. 
	Assume that for any smooth strictly convex surface $\Gamma$ in $N$, we have $M(\Gamma) \geq Q_{n}(V(\Omega))$. We will prove that for any smooth strictly convex surface $\Gamma$ in $N$, $ \frac{d}{dt} \phi_n(t) \leq 0$, and $M(\Gamma) \geq Q_{n+1}(V(\Omega))$ will follow as a corollary.
	
	As in \eqref{eq:phi'}, we compute 
	\begin{align}
		\label{eq:iterate:phi'}
		&\frac{d}{dt}\phi_n(t) 
		\\
		\nonumber
		=&
		2 M_t \frac{d}{dt}M_t
		-16\pi \frac{d}{dt}S(\Gamma_t) 
		+4aS(\Gamma_t)\frac{d}{dt}S(\Gamma_t)
		-  P_n'(V(\Omega_t)) \frac{d}{dt} V(\Omega_t)
		\\
		\nonumber
		\leq
		&
		-4 \G_t^2 -4 a M_t  \frac{d}{dt} V(\Omega_t) 
		-16\pi \frac{d}{dt}S(\Gamma_t) 
		+4aS(\Gamma_t)\frac{d}{dt}S(\Gamma_t)
		-  P_n'(V(\Omega_t)) \frac{d}{dt} V(\Omega_t)
		\\
		\nonumber
		=&
		-4 \G_t ( \G_t - 4\pi + a S(\Gamma_t))
		+
		(-4a M_t - P_n'(V(\Omega_t)) )\frac{d}{dt} V(\Omega_t)
		\\
		\nonumber
		=&
		-4 \G_t ( \G_t - 4\pi + a S(\Gamma_t))
		+
		(-4a)( M_t - Q_n(V(\Omega_t)) )\frac{d}{dt} V(\Omega_t).
	\end{align}
	By \eqref{eq:GB} and the assumption that $M(\Gamma) \geq Q_{n}(V(\Omega))$ for any smooth strictly convex surface $\Gamma$ with enclosed domain $\Omega$, we have 
	\begin{align*}
		\frac{d}{dt}\phi_n(t) \leq 0,
	\end{align*}
	and hence $\phi_n(0) \geq 0$, that is,
	\begin{equation*}
		M(\Gamma) \geq \sqrt{16\pi S(\Gamma) -2 a S(\Gamma)^2 + P_n(V(\Omega))}.
	\end{equation*}
	By Theorem \ref{thm:isoperi1}, as $Q_{n+1}$ is defined in \eqref{defn:Q_n}, we have
	\begin{equation*}
		M(\Gamma) \geq \sqrt{16\pi \eta_{0,a}(V(\Omega)) -2 a \eta_{0,a}(V(\Omega))^2 + P_n(V(\Omega))} = Q_{n+1}(V(\Omega)),
	\end{equation*}
	which completes the induction.

	In general, for any positive integer $n$, and for any strictly convex surface $\Gamma$ in $N$ with enclosed domain $\Omega$, we have
	\begin{equation}\label{tmp:minkowski_n}
		M(\Gamma) \geq \sqrt{16 \pi S(\Gamma) - 2a S(\Gamma)^2 - 4a \int_{0}^{V(\Omega)} Q_n(t)dt }.
	\end{equation}
	
	\textbf{Step 3: On the limit of the sequence of functions $\{Q_n\}$}
	
	Clearly for any positive integer $n$, $Q_n$ is continuous on $[0,\infty)$, $C^{\infty}$ on $(0,\infty)$ with $Q_n(0)=0$, and monotonically non-decreasing.
	Since $Q_{n+1}(x) \geq Q_{n}(x)$ implies
	\begin{align*}
		Q_{n+2}(x) = &\sqrt{16 \pi \eta_{0,a}(x) - 2a \eta_{0,a}(x)^2 - 4a \int_{0}^{x} Q_{n+1}(t)dt} \\
		\geq 
		&\sqrt{16 \pi \eta_{0,a}(x) - 2a \eta_{0,a}(x)^2 - 4a \int_{0}^{x} Q_{n}(t)dt} = Q_{n+1}(x),
	\end{align*}
	we have for any $x\in (0,\infty)$, $ Q_{n+1}(x) \geq Q_{n}(x) $ by induction.
	For any $x>0$, \eqref{tmp:minkowski_n} implies the sequence $\{Q_{n}(x)\}_{n\geq 1}$ is bounded. Hence there exists a limit function $Q_{\infty}: [0,\infty) \rightarrow [0,\infty)$ such that $\lim_{n \rightarrow \infty} Q_n = Q_{\infty}$ pointwise on $[0,\infty)$ and uniformly on any compact subset of $[0,\infty)$. Moreover, for any $x>0$, $Q_{\infty}$ is differentiable at $x$. By taking the limit in \eqref{tmp:minkowski_n}, we have
	\begin{equation}\label{tmp:minkowski_infty}
		M(\Gamma) \geq \sqrt{16 \pi S(\Gamma) - 2a S(\Gamma)^2 - 4a \int_{0}^{V(\Omega)} Q_{\infty}(t)dt }.
	\end{equation}
	
	We claim that $Q_{\infty} = \xi_{0,a}$, where $\xi_{0,a}$ is the total mean curvature profile function in $\HH^3(a)$ as defined in \eqref{note1.defn.SM}. To prove this, we  take the limit on both sides of \eqref{defn:Q_n} to obtain an integral equation satisfied by $Q_{\infty}$.
	\begin{equation}\label{ODE.tmp1}
		Q_{\infty}(x) = \sqrt{16 \pi \eta_{0,a}(x) - 2a \eta_{0,a}(x)^2 - 4a \int_{0}^{x} Q_{\infty}(t)dt }.
	\end{equation}
	Combining \eqref{SM.16pi-4a} and \eqref{SM.lemma}, we have
	$16 \pi \eta_{0,a}(x) - 2a \eta_{0,a}(x)^2 = \xi_{0,a}(x)^2 + 4a \int_{0}^{x} \xi_{0,a}(t)dt  $. Substituting this into \eqref{ODE.tmp1} and simplifying, we get 
	\begin{equation}\label{ODE.tmp2}
		Q_{\infty}(x)^2 + 4a \int_{0}^{x} Q_{\infty}(t)dt
		=
		\xi_{0,a}(x)^2 + 4a \int_{0}^{x} \xi_{0,a}(t) dt.
	\end{equation}
	
	Note that \eqref{tmp:minkowski_infty} holds for any Cartan-Hadamard 3-space $N$ with curvature $K\leq a\leq0$, and any strictly convex $\Gamma$ in $N$. If we take $N$ to be $\HH^3(a)$ and $\Gamma$ to be a geodesic sphere $\SSS^2(r)$ of radius $r$ in $\HH^3(a)$, by \eqref{SM.16pi-4a} and \eqref{SM.lemma}, we have
	\begin{equation}\label{tmp.tmp.sphere}
		M(\Gamma) = \sqrt{16\pi S(\Gamma) - 4a S(\Gamma)^2}
		= \sqrt{16\pi S(\Gamma) - 2a S(\Gamma)^2 - 4a \int_{0}^{V(\Omega)}\xi_{0,a}(t)dt}
	\end{equation}
	Comparing \eqref{tmp.tmp.sphere} and \eqref{tmp:minkowski_infty}, we have for any $r\geq0$,
	$$
	\int_{0}^{V(\SSS^2(r))}\xi_{0,a}(t)dt
	\geq
	\int_{0}^{V(\SSS^2(r))}Q_{\infty}(t)dt,
	$$
	that is, for any $x\geq0$,
	\begin{equation}\label{tmp.tmp.compare1}
		\int_{0}^{x} \xi_{0,a}(t)dt
		\geq
		\int_{0}^{x} Q_{\infty}(t)dt.
	\end{equation}
	Comparing \eqref{tmp.tmp.compare1} with \eqref{ODE.tmp2}, we have for any $x\geq0$, 
	\begin{equation}\label{tmp.compare11}
		\xi_{0,a}(x) \geq Q_{\infty}(x).
	\end{equation}
	
	Note that by \eqref{ODE.tmp1} and \eqref{ODE.tmp2}, 
	\begin{equation*}
		Q_{\infty}(x)^2 + 4a \int_{0}^{x} Q_{\infty}(t)dt
		=
		\xi_{0,a}(x)^2 + 4a \int_{0}^{x} \xi_{0,a}(t) dt
		=
		16\pi \eta_{0,a}(x) -2a \eta_{0,a}(x)^2,
	\end{equation*}
	is a strictly increasing function of $x$. Hence, differentiating both sides of \eqref{ODE.tmp2}, we have for any $x>0$,
	\begin{equation}
		\label{ODE.tmp3}
		Q_{\infty}(x) \left( Q_{\infty}'(x) + 2a \right)
		=
		\xi_{0,a}(x) \left( \xi_{0,a}'(x) + 2a \right) > 0,
	\end{equation}
	and using \eqref{tmp.compare11}, we have for any $x\geq0$,
	$
	\xi_{0,a}'(x) \leq Q_{\infty}'(x),
	$
	which, after integration, implies for any $x\geq0$,
	\begin{equation*}
		\xi_{0,a}(x) \leq Q_{\infty}(x),
	\end{equation*}
	which can be combined with \eqref{tmp.compare11} to obtain $\xi_{0,a} = Q_{\infty}$.
	
	Hence by \eqref{SM.lemma}, \eqref{tmp:minkowski_infty} becomes
	\eqref{eq:minkowski1}, which completes the proof of the inequality in Theorem \ref{thm:minkowski1}. Using \eqref{eq:minkowski1}, the isoperimetric inequality \eqref{eq:isoperi1}, and the relation between profile functions \eqref{SM.16pi-4a}, we have
	\begin{equation}\label{eq:cor-minkowski}
		M(\Gamma) \geq \sqrt{16\pi \eta_{0,a}(V(\Omega)) - 4a \eta_{0,a}(V(\Omega))^2} = \xi_{0,a}(V(\Omega)),
	\end{equation}
	which proves the inequality in Corollary \ref{cor:minkowski1}.
	
	\textbf{Step 4: Equality case}
	
	We now discuss the case when equality holds in Theorem \ref{thm:minkowski1}. If equality in \eqref{eq:minkowski1} holds, following \cite{ghomi-spruck2023}, we will first prove $\Gamma$ is a geodesic sphere, then prove its enclosed domain is of constant curvature.
	We construct a new auxiliary function
	$$  \phi_{\infty}(t):= M_t^2 -  16\pi S(\Gamma_t) +2a S(\Gamma_t)^2 + 2a \eta_{0,a}(V(\Omega_t))^2. $$
	As above, using \eqref{eq:cor-minkowski}, one proves that $\phi_{\infty}$ is monotonically non-increasing along harmonic mean curvature flow. 
	
	Assume equality in \eqref{eq:minkowski1}, then $\phi_{\infty}(0)=0$. The monotonicity of \(\phi_\infty\), together with its endpoint behavior, forces \(\phi_\infty\) to be identically zero. Consequently every inequality used in deriving monotonicity is an equality. Equalities hold in \eqref{eq:CS}, which yields  $ M_t M'_t=-2\G_t^2$. The equality condition in Cauchy--Schwarz then implies that $G_t/H_t = \lambda(t)$ is constant on each time slice. Consequently, $\Gamma_t$ are parallel to $\Gamma$, thus each evolving surface is an equidistant hypersurface from the extinction point. The initial surface is therefore a geodesic sphere. 
	
	Moreover, the equality in \eqref{eq:GB} holds. This forces $\operatorname{Ric}(\nu_t)\equiv 2a$, which in turn yields that the sectional curvatures with respect to planes containing $\nu_t$ are equal to $a$, since they are no greater than $a$. Consequently, by the rigidity lemma \cite[Lem. 5.4]{ghomi-spruck2022}, all sectional curvatures of $N$ in the (geodesic) ball bounded by $\Gamma$ are equal to $a$, which implies that the domain bounded by $\Gamma$ is isometric to a geodesic ball in $\HH^3(a)$ and completes the proof.
\end{proof}
	
	Theorem \ref{thm:minkowski1} can be extended to general convex surfaces in Cartan-Hadamard 3-spaces by approximations using outer parallel surfaces. That is,
	\begin{thm}\label{thm:minkowski1-convex}
		Minkowski's inequality \eqref{eq:minkowski1} holds for all convex surfaces $\Gamma$ in a Cartan-Hadamard $3$-space $N$ with curvature $K\leq a\leq 0$.
	\end{thm}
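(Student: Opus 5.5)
We derive the statement from Theorem \ref{thm:minkowski1} by approximating a convex surface $\Gamma$ with bounding domain $\Omega$ by its outer parallel surfaces and passing to the limit. Concretely, for $\varepsilon>0$ let $\Omega^\varepsilon$ be the set of points at distance at most $\varepsilon$ from $\Omega$, and $\Gamma^\varepsilon=\partial\Omega^\varepsilon$. In a Cartan-Hadamard space the distance function to a convex set is convex and $C^{1,1}$ off the set, so $\Gamma^\varepsilon$ is a $C^{1,1}$ convex surface. The Riccati comparison under $K\le a\le 0$ gives principal curvatures at least $\sqrt{-a}\tanh(\sqrt{-a}\,\varepsilon)$, or at least $1/\varepsilon$-type bounds when $a=0$. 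These are strictly positive, but only in a weak (support-function) sense.

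\textbf{Step 1 (smoothing).} We first reduce to smooth strictly convex surfaces. Since $\Gamma^\varepsilon$ is uniformly strictly convex in the barrier sense, we mollify the distance function $d_\Omega$ near the level set $\{d_\Omega=\varepsilon\}$ to obtain a smooth function. Its $\varepsilon$-level set $\Gamma^{\varepsilon,\delta}$ is smooth and strictly convex for $\delta$ small, because Hessian lower bounds survive small mollification in normal coordinates up to errors $o(1)$ as $\delta\to 0$. Alternatively, a short run of harmonic mean curvature flow could be used, but this requires smooth strictly convex initial data, so mollification is the cleaner choice. By Theorem \ref{thm:minkowski1}, $\Gamma^{\varepsilon,\delta}$ satisfies \eqref{eq:minkowski1}.

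\textbf{Step 2 (limits).} As $\delta\to0$ and then $\varepsilon\to0$, the domains converge in the Hausdorff sense, and areas and volumes converge by convexity. Area convergence follows from monotonicity of area under inclusion of convex sets and continuity in Hausdorff distance. Volume converges trivially. The function $\eta_{0,a}$ is continuous, so the right side of \eqref{eq:minkowski1} converges to its value for $\Gamma$.

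\textbf{Step 3 (main obstacle: total mean curvature).} The hard part is defining $M(\Gamma)$ for a nonsmooth convex surface and proving $M(\Gamma^{\varepsilon,\delta})\to M(\Gamma)$. I would take the definition $M(\Gamma):=\lim_{\varepsilon\to0}M(\Gamma^\varepsilon)$, which is the standard convention in \cite{ghomi-spruck2023}. For existence of this limit, use the Steiner-type formula $\frac{d}{d\varepsilon}S(\Gamma^\varepsilon)=M(\Gamma^\varepsilon)$, valid almost everywhere for $C^{1,1}$ parallel surfaces. Combined with the Riccati equation, whose curvature terms are controlled by $K\le a$ and Gauss–Bonnet, this yields monotonicity of $M(\Gamma^\varepsilon)$ in $\varepsilon$ after subtracting a bounded correction, so the limit exists. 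If $\Gamma$ is already smooth, this agrees with the usual $M(\Gamma)$.

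It remains to pass from $\Gamma^{\varepsilon,\delta}$ to $\Gamma^\varepsilon$. Because $H\,d\mu$ integrated over the level set equals the divergence of the unit normal field, we can write $M=\int\mathrm{div}(\nabla u/|\nabla u|)$ via Stokes and the coarea formula. Mollification converges in $W^{2,p}$, and for $C^{1,1}$ functions this gives $M(\Gamma^{\varepsilon,\delta})\to M(\Gamma^\varepsilon)$ for almost every $\varepsilon$, which suffices. Letting $\delta\to0$ and then $\varepsilon\to0$ in \eqref{eq:minkowski1} for $\Gamma^{\varepsilon,\delta}$ yields the inequality for $\Gamma$.
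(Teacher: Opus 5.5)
\textbf{Gap at $a=0$.} Your overall route is the one the paper delegates to Ghomi--Spruck: outer parallel surfaces $\Gamma^\varepsilon$, smoothing to surfaces covered by Theorem \ref{thm:minkowski1}, and passage to the limit with $M(\Gamma):=\lim_{\varepsilon\to0}M(\Gamma^\varepsilon)$. For $a<0$ your Step 1 works. The Riccati comparison $\kappa'\ge-\kappa^2-a$ with $\kappa(0)\ge0$ gives $\kappa\ge\sqrt{-a}\tanh(\sqrt{-a}\,\varepsilon)$ on $\Gamma^\varepsilon$. Since $\nabla^2 d_\Omega$ annihilates $\nabla d_\Omega$ and $\nabla u_\delta$ stays $o(1)$-close to $\nabla d_\Omega$, this tangential margin survives mollification.

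The statement, however, includes $a=0$, and there your claim is false. The comparison then gives only $\kappa\ge0$. The bound of order $1/\varepsilon$ is an \emph{upper} bound, coming from the ball of radius $\varepsilon$ inside $\Omega^\varepsilon$ touching $\Gamma^\varepsilon$ at each point; that is what makes $\Gamma^\varepsilon$ of class $C^{1,1}$. Concretely, in $\RR^3$, or in any flat region of $N$, the outer parallel surfaces of a convex body with a flat piece (even a smooth one) still contain a flat piece. So for $a=0$ your mollified level sets $\Gamma^{\varepsilon,\delta}$ satisfy only a tangential Hessian bound $\ge -o(1)$. They need not be strictly convex, or even convex. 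Hence Theorem \ref{thm:minkowski1} cannot be applied to them, since Xu's harmonic mean curvature flow needs smooth strictly convex data. Your Step 2 area argument, via monotonicity under inclusion of convex sets, also loses its hypothesis.

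\textbf{Repair and minor remarks.} Build in a definite convexity margin before smoothing. For instance, fix $o\in N$ with distance function $\rho_o$, so that $\nabla^2\rho_o^2\ge 2g$ on a Cartan--Hadamard space. Then $f_\tau:=d_\Omega+\tau\rho_o^2$ is convex with $\nabla^2 f_\tau\ge 2\tau g$. Its level set $\{f_\tau=\varepsilon\}$ is compact and strictly convex, with second fundamental form $\ge 2\tau/|\nabla f_\tau|$ and $C^{1,1}$ bounds independent of small $\tau$. Mollify with $\delta\ll\tau$ and apply Theorem \ref{thm:minkowski1}. Then let $\delta\to0$, then $\tau\to0$, then $\varepsilon\to0$. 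Uniform $C^{1,1}$ bounds together with $C^1$ convergence give convergence of $M$, because $H$ depends linearly on second derivatives. This also makes your $W^{2,p}$/coarea argument unnecessary; that argument yields only $L^1$ convergence in the level parameter, hence a.e. convergence along a subsequence. Finally, no correction term is needed to define $M(\Gamma)$. Indeed $\frac{d}{ds}M(\Gamma^s)=\int_{\Gamma^s}(2G-\operatorname{Ric}(\nu))\,d\mu\ge0$ since $\operatorname{Ric}\le0$, so $M(\Gamma^s)$ is nondecreasing in $s$ and the limit exists.
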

	The proof follows from Ghomi-Spruck \cite[Section 3]{ghomi-spruck2023}.
	
	\section{Comparison of Inequality \eqref{eq:minkowski1} with \eqref{eq:minkowski3}} \label{sec:4}
		In this section, we compare inequality \eqref{eq:minkowski1} with inequality \eqref{eq:minkowski3} for convex surfaces in the standard hyperbolic space $\HH^3$, that is, $a=-1$, and show that inequality \eqref{eq:minkowski1} is sharper in this setting.
		
		\begin{prop}\label{prop:compare-inequality}
			Let $\Omega$ be a domain in $\HH^3$ such that its boundary $\Gamma$ is a smooth surface. Then
			\begin{align}
				\label{note2.prop}
				&\sqrt{16\pi S(\Gamma)+ 2 S(\Gamma)^2 + 2  \eta_{0,-1}(V(\Omega))^2} 
				\\
				\nonumber
				\geq
				&
				\sqrt{S(\Gamma)}\sqrt{S(\Gamma)+4\pi} + 4\pi \operatorname{arcsinh}\left( \sqrt{\frac{S(\Gamma)}{4\pi}} \right)
				+ 2 V(\Omega),
			\end{align}
			and equality holds only if $\Gamma$ is a sphere in $\HH^3$.
		\end{prop}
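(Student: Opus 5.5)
The plan is to fix the area $S:=S(\Gamma)$ and regard both sides of \eqref{note2.prop} as functions of the enclosed volume. That volume is constrained only by the isoperimetric inequality, Theorem \ref{thm:isoperi1} with $a=-1$, which gives $\eta_{0,-1}(V(\Omega))\le S$.

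\textbf{Parametrization.} Write $V(\Omega)=V_{-1}^B(\rho)$ for a unique $\rho\ge 0$, so that $\eta_{0,-1}(V(\Omega))=S_{-1}^B(\rho)$ by \eqref{note1.defn.SM}. Let $r_S$ be defined by $S_{-1}^B(r_S)=S$. Then the constraint is exactly $0\le \rho\le r_S$. Define
\begin{align*}
L(\rho)&:=\sqrt{16\pi S+2S^2+2S_{-1}^B(\rho)^2},\\
R(\rho)&:=\sqrt{S}\sqrt{S+4\pi}+4\pi\operatorname{arcsinh}\Bigl(\sqrt{S/4\pi}\Bigr)+2V_{-1}^B(\rho),
\end{align*}
with $S$ held fixed. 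The claim becomes $L(\rho)\ge R(\rho)$ on $[0,r_S]$.

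\textbf{Endpoint check.} At $\rho=r_S$ the two sides agree. Indeed, with $S=4\pi\sinh^2 r_S$ we get
\begin{itemize}
\item $\sqrt{S}\sqrt{S+4\pi}=4\pi\sinh r_S\cosh r_S$,
\item $\operatorname{arcsinh}\bigl(\sqrt{S/4\pi}\bigr)=r_S$,
\item $2V_{-1}^B(r_S)=4\pi(\sinh r_S\cosh r_S-r_S)$.
\end{itemize}
Hence $R(r_S)=8\pi\sinh r_S\cosh r_S=M_{-1}^B(r_S)$. On the other side, $L(r_S)=\sqrt{16\pi S+4S^2}=\xi_{0,-1}=M_{-1}^B(r_S)$ by \eqref{SM.16pi-4a}.

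\textbf{Monotonicity.} I will show that $L-R$ is strictly decreasing on $[0,r_S]$. Using \eqref{note1.eq.derivative},
$$
L'(\rho)=\frac{2S_{-1}^B(\rho)M_{-1}^B(\rho)}{L(\rho)},\qquad R'(\rho)=2S_{-1}^B(\rho).
$$
So $L'\le R'$ is equivalent, for $\rho>0$, to $M_{-1}^B(\rho)\le L(\rho)$. By \eqref{SM.16pi-4a},
$$
M_{-1}^B(\rho)^2=16\pi S_{-1}^B(\rho)+4S_{-1}^B(\rho)^2 .
$$
This is at most $16\pi S+2S^2+2S_{-1}^B(\rho)^2$ precisely because $S_{-1}^B(\rho)\le S$. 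The inequality is strict when $\rho<r_S$, since then $S_{-1}^B(\rho)<S$ and the term $16\pi S_{-1}^B(\rho)$ is strictly smaller than $16\pi S$.

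\textbf{Conclusion and equality.} It follows that $L-R$ is strictly decreasing on $[0,r_S]$ and vanishes at $r_S$. Hence $L\ge R$, with equality only at $\rho=r_S$. Equality at $\rho=r_S$ means equality in the isoperimetric inequality, so by Theorem \ref{thm:isoperi1} $\Omega$ is a geodesic ball and $\Gamma$ is a sphere.

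The only delicate point is the derivative comparison, that is, checking $M_{-1}^B(\rho)\le L(\rho)$. Everything else is the explicit sphere computation above.
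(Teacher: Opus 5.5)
Your proof is correct. It is the paper's argument with the roles of the two variables exchanged.

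\textbf{The paper's route.} It fixes $V$ and integrates in $S$ from the isoperimetric value $\eta_{0,-1}(V)$ up to $S(\Gamma)$. The key comparison is between $\partial_S F_2=(8\pi+2S)/\sqrt{16\pi S+4S^2}$ and $\partial_S F_1=(8\pi+2S)/\sqrt{16\pi S+2S^2+2\eta_{0,-1}(V)^2}$.

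\textbf{Your route.} You fix $S$ and move the volume, through the radius $\rho$, up to its isoperimetric value. All area terms on the right, including the $\operatorname{arcsinh}$, are then constants and never need to be differentiated. Your derivative comparison $M_{-1}^B(\rho)\le L(\rho)$ reads $\xi_{0,-1}(V)\le\sqrt{16\pi S+2S^2+2\eta_{0,-1}(V)^2}$. This is exactly the implication the paper uses to pass from Theorem \ref{thm:minkowski1} to Corollary \ref{cor:minkowski1}.

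\textbf{What the two share.} Both proofs rest on the same two facts. First, the two sides coincide, both equal to the total mean curvature of the sphere, on the curve $S=\eta_{0,-1}(V)$. Second, $\eta_{0,-1}(V)\le S$ makes the derivative comparison go the right way off that curve. Both also obtain the equality case from the rigidity statement in Kleiner's inequality.

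\textbf{Trade-offs.} The paper's version needs the derivative of the $\operatorname{arcsinh}$ term but no parametrization. Yours needs \eqref{note1.eq.derivative} but makes the endpoint identity explicit through $S=4\pi\sinh^2 r_S$. The paper states that identity for $F_2$ without computation.
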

		\begin{proof}
			Define the two-variable functions
			\begin{align*}
				F_1(S,V)&:=\sqrt{16\pi S + 2S^2 + 2 \eta_{0,-1}(V)^2},
				\\
				F_2(S,V)&:=\sqrt{S}\sqrt{S+4\pi} + 4\pi \operatorname{arcsinh}\left( \sqrt{\frac{S}{4\pi}} \right) + 2V,
			\end{align*}
			where $S, V \geq 0$.
			We will show $F_1(S,V) \geq F_2(S,V)$ if $S \geq \eta_{0,-1}(V)$.
			
			We may compute
			\begin{align*}
				\p_{S}F_1(S,V) 
				&= \frac{16\pi + 4 S}{2\sqrt{16\pi S + 2S^2 + 2 \eta_{0,-1}(V)^2}}
				= \frac{8\pi + 2 S}{\sqrt{16\pi S + 2S^2 + 2 \eta_{0,-1}(V)^2}},
				\\
				\p_{S}F_2(S,V)
				&= \frac{2 S + 4\pi}{2\sqrt{S(S+4\pi)}}
				+
				4\pi \frac{1}{\sqrt{\frac{S}{4\pi}+1}} \frac{1}{\sqrt{4\pi}} \frac{1}{2 \sqrt{S}}
				= \frac{4\pi + S}{\sqrt{4\pi S + S^2}}.
			\end{align*}
			Therefore, for $S, V$ satisfying $S \geq \eta_{0,-1}(V)$, we have
			\begin{align}\label{Note2.tmp.compare}
				\p_{S}F_2(S,V) 
				= \frac{8\pi + 2S}{\sqrt{16\pi S + 4 S^2}}
				\leq \frac{8\pi + 2S}{\sqrt{16\pi S + 2S^2 + 2 \eta_{0,-1}(V)^2}} 
				= \p_{S}F_1(S,V),
			\end{align}
			hence
			\begin{align*}
				F_2(S,V) 
				= 
				&
				F_2(\eta_{0,-1}(V), V) + \int_{\eta_{0,-1}(V)}^{S} \p_{S}F_2(t,V) dt
				\\
				\leq
				&
				F_1(\eta_{0,-1}(V), V) + \int_{\eta_{0,-1}(V)}^{S} \p_{S}F_1(t,V) dt
				\\
				=
				&
				F_1(S,V).
			\end{align*}
			This is because $ F_2(\eta_{0,-1}(V), V) =F_1(\eta_{0,-1}(V), V)$. They both equal the total mean curvature of the sphere in $\HH^3$ with volume $V$.
			
			For a smooth closed surface \(\Gamma\) with enclosed domain $\Omega$, setting \(S=S(\Gamma)\) and \(V=V(\Omega)\), Theorem \ref{thm:isoperi1} gives $S \geq \eta_{0,-1}(V)$, and therefore the inequality follows.
			If equality holds, then the equality in \eqref{Note2.tmp.compare} holds, which implies $S = \eta_{0,-1}(V)$. By Theorem \ref{thm:isoperi1}, for a closed surface $\Gamma$ in $\HH^3$ with enclosed domain $\Omega$, if $S(\Gamma) = \eta_{0,-1}(V(\Omega))$, then $\Gamma$ is a sphere.
		\end{proof}
		
		Using notation in the proof of Proposition \ref{prop:compare-inequality}, if the ambient space $N$ is $\HH^3$, then \eqref{eq:minkowski1} can be written as follows: For any smooth convex surface $\Gamma$ in $\HH^3$, with enclosed domain $\Omega$,
		$
		M(\Gamma) \geq F_1(S(\Gamma), V(\Omega)).
		$
		Moreover, \eqref{eq:minkowski3} implies that for any smooth convex surface $\Gamma$ in $\HH^3$, with enclosed domain $\Omega$,
		$
		M(\Gamma) \geq F_2(S(\Gamma), V(\Omega)).
		$
		By Proposition \ref{prop:compare-inequality}, \eqref{eq:minkowski1} is sharper.

	\addtocontents{toc}{\protect\setcounter{tocdepth}{0}}%for hiding from table of contents
	\section*{Acknowledgments}
	I would like to thank my supervisor, Pengfei Guan, for introducing this problem to me and for many inspiring discussions. I would also like to thank Junfang Li for helpful discussions.
	
	\addtocontents{toc}{\protect\setcounter{tocdepth}{1}}%to resume listing of the sections  
	%\bibliography{references}
	% \bib, bibdiv, biblist are defined by the amsrefs package.

\end{document}